\newtheorem{lemma}{Lemma}[section]
\newtheorem{teo}[lemma]{Theorem}
\theoremstyle{definition}
\newtheorem{quest}[lemma]{Question}
\newtheorem{rem}[lemma]{Remark}
\newcommand{\matB}{\mathbb{B}}
\newcommand{\matN}{\ensuremath {\mathbb{N}}}
\newcommand{\calH} {\ensuremath {\mathcal{H}}}
\newcommand{\R} {\ensuremath {\mathbb{R}}}
\newcommand{\calC} {\ensuremath {\mathcal{C}}}
\author{Roberto Frigerio}
\address{Dipartimento di Matematica \\
Universit\`a di Pisa \\
Largo B.~Pontecorvo 5 \\
56127 Pisa, Italy}
\email{frigerio@dm.unipi.it}
\title[A note on  measure homology]{A note on  measure homology}
\subjclass[2000]{55N35}
\keywords{Measure homology; CW-complex; continuous cohomology}
\thanks{}
\begin{document}
\begin{abstract}
Measure homology was introduced by Thurston in order to compute the simplicial volume
of hyperbolic manifolds. Berlanga endowed measure homology  
with a structure of graded locally convex (possibly non-Hausdorff) topological vector space.
In this note we completely characterize Berlanga's topology on measure homology
of CW-complexes,
showing in particular
that it is Hausdorff.
This answers a question posed by Berlanga.
\end{abstract}
\maketitle

\section{Introduction}
Measure homology was introduced by Thurston in~\cite{Thurston},
where it was exploited to compute the 
simplicial volume of closed hyperbolic manifolds.
It is proved in~\cite{Hansen,Zastrow}
that measure homology is canonically isomorphic to the usual real singular homology, 
at least for CW-complexes. Moreover, Loeh proved in~\cite{Loh} that
these homology theories
are not only isomorphic, but also isometric (with respect to
Thurston's seminorm on measure homology~\cite{Thurston}
and Gromov's seminorm on singular homology~\cite{Gromov}), a fact that plays a fundamental
r\^ole in applications to the simplicial volume.
For a comprehensive account about the notion
of measure homology and its applications see \emph{e.g.}~the introductions
of \cite{Zastrow, Berlanga}.

Thurston's seminorm is not the only extra-structure naturally
supported by measure homology.
In~\cite{Berlanga}, for every $n\in\matN$ the $n$-th measure homology 
module $\calH_n(X)$ 
of a topological space $X$ is endowed with a natural structure of locally convex (possibly non-Hausdorff)
topological vector space (this structure is not related to the topology induced by Thurston's
seminorm, see Remark~\ref{last} at the end of the paper). 

Let us now recall the main result of~\cite{Berlanga}:

\begin{teo}\label{berlanga}
Suppose that $X$ has the homotopy type of a countable CW-complex. Then
$\calH_1(X)$ is a locally convex Hausdorff vector space.
\end{teo}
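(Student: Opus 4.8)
The plan is to reduce the statement to the closedness of the subspace of boundaries, and then to verify that closedness using continuous cohomology. Note first that $\calH_1(X)$ is automatically locally convex, being a subquotient of the locally convex space $\calC_1(X)$, so the only point at issue is Hausdorffness. Since the boundary operators are continuous, $Z_1:=\Ker\bigl(\partial_1\colon\calC_1(X)\to\calC_0(X)\bigr)$ is closed, we have $\calH_1(X)=Z_1/B_1$ with $B_1:=\partial_2\bigl(\calC_2(X)\bigr)\subseteq Z_1$, and a quotient of a topological vector space by a linear subspace is Hausdorff precisely when that subspace is closed; thus the theorem is equivalent to the assertion that $B_1$ is closed in $\calC_1(X)$. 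To obtain this I would exhibit a family $\{\phi_i\}$ of continuous linear functionals on $\calC_1(X)$, each vanishing on $B_1$, such that every $\mu\in Z_1\setminus B_1$ is detected by some $\phi_i$, i.e.\ $\phi_i(\mu)\ne 0$; then $B_1=\bigcap_i\bigl(\Ker\phi_i\cap Z_1\bigr)$ is an intersection of closed sets, hence closed.

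The functionals come from continuous cochains. For a continuous function $c$ on the space of singular $1$-simplices of $X$, set $\phi_c(\mu)=\int c\,d\mu$. Because measure chains are, by definition, of finite total variation and determined on a compact subset, $\phi_c$ is well defined even when $c$ is unbounded, and continuity of $\phi_c$ for Berlanga's topology follows from the definition of the latter. A straightforward computation with the simplicial face operators yields $\phi_c(\partial_2\lambda)=\int(\delta c)\,d\lambda$ for all $\lambda\in\calC_2(X)$; evaluating against Dirac measures, one sees that $\phi_c$ vanishes on $B_1$ exactly when $c$ is a continuous $1$-cocycle. Finally, since the isomorphism $\calH_1(X)\cong H_1(X;\matR)$ of~\cite{Hansen,Zastrow} is induced by $\sigma\mapsto\delta_\sigma$ and $\phi_c(\delta_\sigma)=c(\sigma)$, for a continuous cocycle $c$ the functional $\phi_c$ computes, on $Z_1$, the Kronecker pairing of the class of $\mu$ in $H_1(X;\matR)$ with the ordinary singular $1$-cohomology class underlying $c$.

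The geometric heart is to prove that every class in $H^1(X;\matR)$ is represented by a continuous $1$-cocycle. It suffices to argue one path component at a time, so assume $X$ is path-connected; then $H^1(X;\matR)\cong\mathrm{Hom}(\pi_1(X),\matR)$. Given a homomorphism $\rho\colon\pi_1(X)\to\matR$, pass to the universal cover $p\colon\wdtX\to X$ and, using that $\pi_1(X)$ acts freely and cellularly on the CW-complex $\wdtX$ and that $\matR$ is contractible, construct skeleton by skeleton a continuous map $h\colon\wdtX\to\matR$ with $h(g\cdot\tilde x)=h(\tilde x)+\rho(g)$ for every $g\in\pi_1(X)$. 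For a singular $1$-simplex $\sigma$ of $X$ choose a lift $\tilde\sigma$ to $\wdtX$ (possible since $\Delta^1$ is simply connected) and put $c_\rho(\sigma)=h(\tilde\sigma(1))-h(\tilde\sigma(0))$: equivariance of $h$ makes this independent of the chosen lift, the fact that the induced map on spaces of singular $1$-simplices is a covering makes $c_\rho$ continuous, and a telescoping identity shows that $\delta c_\rho=0$ and that $c_\rho$ represents $\rho$. Note that $c_\rho$ is unbounded unless $\rho=0$, which is exactly why the compact-support condition on measure chains is needed above.

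It remains to combine the ingredients. By the universal coefficient theorem, $H^1(X;\matR)$ is canonically the full algebraic dual of the real vector space $H_1(X;\matR)$, so the natural map $H_1(X;\matR)\to H^1(X;\matR)^{*}$ is injective. If $\mu\in Z_1$ is annihilated by $\phi_c$ for every continuous $1$-cocycle $c$, then by the previous two paragraphs the class of $\mu$ in $H_1(X;\matR)$ pairs to zero with every element of $H^1(X;\matR)$, hence vanishes, so $\mu\in B_1$. Therefore the $\phi_c$ form the required separating family, $B_1$ is closed, and $\calH_1(X)$ is Hausdorff. The step I expect to require the most care is the construction in the third paragraph — producing continuous, and necessarily unbounded, cocycles representing arbitrary real $1$-dimensional cohomology classes — together with the verification that integration against such a cochain is continuous for Berlanga's topology; this is also the place where the restriction to degree $1$ is genuinely exploited, through the identification $H^1\cong\mathrm{Hom}(\pi_1,\matR)$.
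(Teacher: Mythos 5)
Your argument is correct in substance, but it reaches the conclusion by a genuinely different route from the paper's. The paper does not prove Theorem~\ref{berlanga} directly: it deduces the stronger Theorem~\ref{main:teo} by observing that Berlanga's topology is the weak topology of the pairing $\eta_n$ with \emph{continuous cohomology}, and then showing that every linear functional on $\calH_n(X)$ is represented via $\eta_n$ --- the three inputs being the Universal Coefficient Theorem, the Zastrow--Hansen isomorphism $H_n(X)\cong\calH_n(X)$, and the isomorphism $H^n_c(X)\cong H^n(X)$ quoted from~\cite{Frigerio}. Your proof shares the first two inputs and the overall mechanism (detect measure cycles by integrating continuous cocycles, transfer to singular (co)homology), but replaces the third, which is the real content, by a hands-on construction: you realize every class of $H^1(X;\matR)\cong\mathrm{Hom}(\pi_1(X),\matR)$ by a continuous (unbounded) cocycle built from an equivariant map $\wdtX\to\matR$. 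This buys self-containedness and elementarity, at the cost of being genuinely confined to degree $1$ (as you note, via $\mathrm{Hom}(\pi_1,\matR)$), whereas the paper's citation works in all degrees and yields the stronger conclusion that the topology is the strongest weak topology, not merely Hausdorff; your separating family shows only that the pairing is nondegenerate on the $\calH_1$ side, which suffices for Theorem~\ref{berlanga}. Your reformulation through closedness of $B_1$ is equivalent to the paper's ``weak topology of a point-separating pairing is Hausdorff'' and is fine. Two small points to tighten: (i) the universal-cover, skeleton-by-skeleton construction uses that $X$ \emph{is} a CW-complex, while the hypothesis only gives the homotopy type of one; this is easily repaired by building the continuous cocycle on a CW model $K$ and pulling it back along a homotopy equivalence $f\colon X\to K$ (the map $S_1(X)\to S_1(K)$, $\sigma\mapsto f\circ\sigma$, is continuous), which avoids having to prove homotopy invariance of the topologized measure chain complex; (ii) the continuity of $c_\rho$ via lifting of $1$-simplices deserves a word of justification, though it is standard for coverings of CW-complexes. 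Note also that, like the paper, your argument never uses countability of the CW-complex.
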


In~\cite{Berlanga} Berlanga asks the following:

\begin{quest}\label{main:quest}
 Is $\calH_n(X)$ a Hausdorff space in general?
\end{quest}

In this note 
we completely characterize Berlanga's topology on measure homology, 
giving in particular a positive answer to Question~\ref{main:quest}, at least in the case of CW-complexes.
If $W$ is a real vector space, then the \emph{strongest weak topology} on $W$ is the weakest topology
which makes every linear functional on $W$ continuous (see Subsection~\ref{weak:sub} below). Our main result is the following:

\begin{teo}\label{main:teo}
Suppose that $X$ has the homotopy
type of a  CW-complex and let $n\in\matN$. Then Berlanga's topology
on $\calH_n(X)$ coincides with the strongest weak topology. In particular, it
is Hausdorff. 
\end{teo}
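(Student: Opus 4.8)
The plan is to pin down Berlanga's topology on $\calH_n(X)$ by computing, on the one hand, its space of continuous linear functionals and by checking, on the other hand, that nothing finer than a weak topology survives after passing to the homology quotient. Throughout I would use that $\calH_n(-)$, together with Berlanga's topology, is a homotopy functor (this is implicit in \cite{Berlanga}), so that one may assume $X$ is a CW-complex; it is then convenient to reduce further to tractable models — finite subcomplexes, or wedges of spheres realising a homology basis — using the skeletal filtration and the compatibility of Berlanga's topology with the induced maps, but the essential phenomena are already visible there.

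\textit{Step 1 (the continuous dual).} We recall that Berlanga's topology on the chain space $\calC_n(X)$ is governed by the weak-$*$ topologies on the spaces of signed measures supported on the compact determination sets (an inductive-limit type topology), so that its topological dual is the space $C^n_c(X)$ of continuous real cochains on $C(\Delta^n,X)$, the pairing being integration. Restricting to the closed subspace $Z_n=\Ker\bb_n$ of cycles and passing to the quotient $\calH_n(X)=Z_n/\bb_{n+1}\calC_{n+1}(X)$, the Hahn--Banach theorem identifies the topological dual of $\calH_n(X)$ with the subquotient of $C^n_c(X)$ consisting of continuous cocycles modulo continuous cochains vanishing on all cycles, that is, with the continuous cohomology $H^n_c(X)$ paired with $\calH_n(X)$ by integration. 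For CW-complexes the inclusion of continuous cochains into singular cochains is a chain homotopy equivalence (the cohomological counterpart of the comparison theorems of \cite{Hansen,Zastrow}, and the point where the CW hypothesis enters), hence $H^n_c(X)\cong H^n(X;\matR)$; and since $\matR$ is a field, the universal coefficient theorem gives $H^n(X;\matR)\cong\mathrm{Hom}_\matR(H_n(X;\matR),\matR)$, which, under the isomorphism $\calH_n(X)\cong H_n(X;\matR)$ of \cite{Hansen,Zastrow}, is the \emph{full algebraic dual} of $\calH_n(X)$. Conclusion: every linear functional on $\calH_n(X)$ is Berlanga-continuous.

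\textit{Step 2 (consequences and the remaining inclusion).} From Step 1 it is immediate that Berlanga's topology $\tau_B$ is finer than the strongest weak topology $\sigma$ and that $\sigma$ is Hausdorff, the algebraic dual separating points; in particular $\tau_B$ is Hausdorff, which already reproves and extends Theorem~\ref{berlanga} and answers Question~\ref{main:quest} for CW-complexes. There remains the reverse inclusion $\tau_B\subseteq\sigma$, equivalently: every $\tau_B$-continuous seminorm on $\calH_n(X)$ factors through a finite-dimensional quotient (equivalently, every $\tau_B$-neighbourhood of $0$ contains a finite-codimensional subspace). Here I would lift such a seminorm to a continuous seminorm on $Z_n$ vanishing on the boundaries; on each space of measures supported on a fixed compact set — where the ambient topology is itself weak, being a weak-$*$ topology — its restriction automatically factors through a finite-dimensional quotient, and the task is to promote this to the homology level, exploiting that $\calH_n(X)$ is spanned by cellular cycles with controlled supports so that a single continuous seminorm can constrain only finitely many basis classes at once. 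Combining the two inclusions identifies Berlanga's topology on $\calH_n(X)$ with the strongest weak topology, which is Hausdorff.

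The main obstacle is precisely the reverse inclusion in Step 2. The inductive-limit, non-metrizable and potentially very fine nature of Berlanga's topology means one must genuinely rule out infinite-dimensional continuous seminorms, and the mechanism for doing so — showing that passing to homology collapses all such data down to finite-dimensional information — is exactly where the CW-structure must be used quantitatively, rather than merely as a black box supplying the isomorphism with singular homology.
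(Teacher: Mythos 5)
Your Step~1 is, in substance, the paper's entire proof: the paper introduces the integration pairing $\eta_n\colon H^n_c(X)\times\calH_n(X)\to\matR$, compares it with the Kronecker pairing via the isomorphisms $H^n_c(X)\cong H^n(X)$ (continuous versus singular cohomology of CW-complexes) and $H_n(X)\cong\calH_n(X)$ (Zastrow--Hansen), and concludes from the Universal Coefficient Theorem that every linear functional on $\calH_n(X)$ is represented via $\eta_n$ by a continuous cohomology class. That is exactly your computation of the dual, and you correctly locate where the CW hypothesis enters (though note that $H^n_c(X)\cong H^n(X)$ is a separate theorem, proved via Milnor's simplicial approximation and local contractibility, not a formal ``cohomological counterpart'' of the Hansen--Zastrow comparison).

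The genuine gap is your Step~2 --- but not for the reason you give. You leave the inclusion $\tau_B\subseteq\sigma$ as ``the main obstacle,'' to be attacked by lifting continuous seminorms and analysing compact determination sets and cellular cycles; none of that is carried out, so as written the argument is incomplete. In fact no such analysis is needed, and no quantitative use of the CW structure either. Berlanga's topology on $\calC_n(X)$ is \emph{by definition} the weak topology determined by the family $\{\psi_\lambda\}$, and the subspace topology induced on $\ker\bb_n$ and the quotient topology induced on $\calH_n(X)$ by a weak topology are again weak topologies, associated respectively to the restricted functionals and to those $\psi_\lambda$ that vanish on boundaries (equivalently, with $\lambda$ a continuous cocycle); this is a standard fact about dual pairs, valid for an arbitrary space $X$. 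Hence $\tau_B$ coincides with the weak topology of the pairing $\eta_n$, and the weak topology of \emph{any} pairing is by definition coarser than the strongest weak topology. So once Step~1 shows that $\eta_n$ represents the full algebraic dual, the two topologies coincide with no further work: the hard part of the theorem is your Step~1, and your Step~2 is free. The route you sketch --- proving directly that every $\tau_B$-continuous seminorm on $\calH_n(X)$ factors through a finite-dimensional quotient by tracking supports of measures --- is a detour that, as presented, does not amount to a proof.
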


\section{Preliminaries}
\subsection{Measure homology}
Let $X$ be a topological space and let
 $S_n (X)$ be the set of singular 
$n$-simplices with values in $X$. We
endow $S_n(X)$ with the compact-open
topology and denote by $\matB_n(X)$ the $\sigma$-algebra
of Borel subsets of $S_n(X)$. 
If $\mu$ is a signed measure on $\matB_n(X)$ (in this case we say for short
that $\mu$ is a Borel measure on $S_n(X)$), then $\mu$ has finite total variation
if $|\mu(A)|<\infty$ for every $A\in\matB_n(X)$.
For every $n\geq 0$, the measure chain module
$\mathcal{C}_n(X)$
is the real vector space of the Borel measures on $S_n(X)$
having finite total variation and admitting a compact determination set
(see~\cite{Zastrow} for the definition of determination set and a detailed discussion
of the relationship between this notion and the notion of support).
The graded module $\mathcal{C}_\ast (X)$ can be given the structure
of a complex via the boundary operator
$$
\partial_{n}\colon \calC_{n}(X)\to \calC_{n-1}(X)\, ,\qquad 
\partial_n\mu =\sum_{j=0}^{n}(-1)^j\mu^j\, , 
$$
where $\mu^j$
is the push-forward of $\mu$ 
under the map that takes a simplex $\sigma\in S_n(X)$ into the composition of 
$\sigma$ with the usual inclusion
of the standard $(n-1)$-simplex onto the $j$-th face of $\sigma$.
The homology of the complex $(\calC_\ast(X),\partial_\ast)$ is the \emph{measure homology of $X$}, 
and it is denoted by $\mathcal{H}_\ast(X)$.

\subsection{Berlanga's topology}
If $\lambda\colon S_n(X)\to\R$ is a continuous function, 
then 
the map
$$
\psi_\lambda\colon \calC_n(X)\to\R\, ,\quad \mu\mapsto \int_{S_n(X)} \lambda\, {\rm d}\mu
$$
is a well-defined linear functional on $\calC_n(X)$.
Following~\cite{Berlanga}, we put on $\calC_n(X)$ the
weakest topology which makes $\psi_\lambda$ continuous for every continuous
$\lambda\colon S_n(X)\to\R$. We also put on $\calH_n(X)$ the quotient topology
induced by the restriction of the topology of $\calC_n(X)$ to the subspace 
$\ker \partial_n\subseteq \calC_n(X)$ of measure cycles.

\subsection{Topological vector spaces}\label{weak:sub}
By a topological vector space we mean a real vector space endowed
with a topology $\tau$ such that the vector space operations are continuous
with respect to $\tau$ (in particular, we don't require that $\tau$ is Hausdorff). 
It is readily seen that Berlanga's topology endows $\calC_n(X)$
with a structure of locally convex topological vector space. Since local convexity
is inherited by subspaces and quotients,
the vector space $\calH_n(X)$ is a locally convex (possibly non-Hausdorff)
topological vector space.

Let us now put Berlanga's definition into the general context of weak topologies associated
to pairings.
If $V,W$ are real vector spaces, a \emph{pairing} between $V$ and $W$ is simply a bilinear
map $\eta\colon V\times W\to\R$. Let $W^*$ be the algebraic dual of $W$. We say that
an element $\beta\in W^*$ is \emph{represented} by $v\in V$ 
via $\eta$ if $\beta=\eta(v,\cdot)$, and we denote by $W^*(\eta)\subseteq W^*$
the subset of functionals that are represented via $\eta$ by some element of $V$.
The \emph{weak topology}
on $W$ corresponding to $\eta$
is the weakest topology which makes every element of $W^*(\eta)$ continuous.
This topology endows $W$ with a structure of 
locally convex topological vector space, and it is Hausdorff if and only
if for every $w\in W$ there exists $v\in V$ such that $\eta(v,w)\neq 0$ (see \emph{e.g.}~\cite[Section 16]{KN}).

We define the \emph{strongest weak topology} $\tau^W_{\rm sw}$ on $W$ as the weak topology
associated to any pairing $\eta$ such that $W^*(\eta)=W^*$. 
So $\tau^W_{\rm sw}$ endows $W$ with a structure of
locally convex Hausdorff vector space.

\begin{rem}\label{weak:rem}
Let $\tau_{\rm slc}^W$ be the strongest locally convex topology on $W$
 (see \emph{e.g.}~\cite{KN} for the definition and several properties of $\tau_{\rm slc}^W$).
If $W$ is finite-dimensional, then $\tau_{\rm sw}^W=\tau_{\rm slc}^W$, and they both coincide
with the Euclidean topology on $W$.
Otherwise, $\tau_{\rm sw}^W$ is strictly weaker than  $\tau_{\rm slc}^W$. In fact,
let $p\colon W\to \R$ be a norm. The unit ball of $p$ does not contain any
nontrivial linear subspace of $W$, while 
every $\tau^W_{\rm sw}$-neighbourhood of $0\in W$ contains a finite-codimensional linear subspace of $W$. Therefore, if $\dim W=\infty$ then $p$ is not continuous with respect
to $\tau_{\rm sw}^W$. On the other hand, every norm is continuous
with respect to $\tau_{\rm slc}^W$, so   
$\tau^W_{\rm sw}\neq \tau^W_{\rm slc}$ unless the dimension of $W$ is finite.
\end{rem}

\subsection{Singular homology vs.~measure homology}
Let $X$ be a topological space.
We denote by $C_*(X)$ the complex of real singular chains 
of $X$, and by
$C^*(X)$
the complex of real singular cochains of $X$, 
\emph{i.e.}~the algebraic dual complex of $C_*(X)$.
We also let $H_*(X)$ (resp.~$H^*(X)$) be the homology of the complex
$C_*(X)$ (resp.~$C^*(X)$), \emph{i.e.}~the usual real singular homology (resp.~cohomology) of $X$.
For every $\sigma\in S_n(X)$, $n\in\matN$, let us denote by $\delta_\sigma\in\calC_n(X)$ the atomic
measure supported by the singleton $\{\sigma\}$. The chain
map
$$
\iota_\ast\colon C_\ast(X)\to \calC_\ast(X)\, ,\qquad
\iota_*\left(\sum_{i=0}^ka_i\sigma_i\right)=\sum_{i=0}^k a_i\delta_{\sigma_i}\, ,
$$
induces a map
$$
H_*(\iota_\ast)\colon H_*(X)\longrightarrow \calH_*(X)\ .
$$
The following result was proved independently by Zastrow and Hansen:

\begin{teo}[\cite{Hansen,Zastrow}]\label{ZH}
Suppose that $X$ has the homotopy type of a CW-complex. Then
the map
$$
H_n(\iota_n)\colon H_n(X)\longrightarrow \calH_n(X)
$$
is an isomorphism for every $n\in\matN$.
\end{teo}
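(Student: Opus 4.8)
The plan is to identify $H_n(\iota_n)$ with the map induced by a natural transformation between two ordinary homology theories and then invoke the Eilenberg--Steenrod uniqueness theorem. As a preliminary step I would check that $\iota_*$ is a chain map: since the push-forward of the atomic measure $\delta_\sigma$ under composition with the $j$-th face inclusion is $\delta_{\sigma\circ d^j}$, the identity $\partial_n\iota_n=\iota_{n-1}\partial_n$ reduces to the corresponding identity for singular chains. To obtain long exact sequences I would extend measure homology to pairs by defining $\calH_*(X,A)$ as the homology of the quotient complex $\calC_*(X)/\calC_*(A)$; here $\calC_*(A)\to\calC_*(X)$ is the push-forward along the inclusion $S_*(A)\hookrightarrow S_*(X)$, which is a topological embedding in the compact-open topology, hence makes this push-forward injective. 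The resulting short exact sequence of complexes gives the long exact sequence of the pair, and $\iota_*$, being natural, induces a morphism between the long exact sequences of $(X,A)$ for singular and for measure homology.

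The remaining Eilenberg--Steenrod axioms for $\calH_*$ would be verified by transporting the classical chain-level constructions to measures via push-forward. For homotopy invariance, given a homotopy $F\colon X\times[0,1]\to Y$, each summand of the classical prism operator has the form $\sigma\mapsto F\circ(\sigma\times\id)\circ\pi$, where $\pi\colon\Delta^{n+1}\to\Delta^n\times[0,1]$ is affine; this is a continuous map $S_n(X)\to S_{n+1}(Y)$, so push-forward of measures defines an operator $T\colon\calC_n(X)\to\calC_{n+1}(Y)$ preserving finite total variation and compact determination sets. The relation $\partial T+T\partial=\calC_*(F_1)-\calC_*(F_0)$ holds because it holds on atomic measures, and the same formulas show $T$ is compatible with $\iota_*$. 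The dimension axiom is immediate: for a one-point space $\{\ast\}$ the set $S_n(\{\ast\})$ is a single point, so $\calC_n(\{\ast\})\cong\R$ and $\iota_n$ is an isomorphism in every degree; hence $H_*(\iota_*)$ is an isomorphism on the coefficients.

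The main obstacle is excision, which requires adapting barycentric subdivision to the measure setting. The subdivision operator $\mathrm{Sd}$ and its chain homotopy are again signed sums of compositions with affine self-maps of $\Delta^n$, so they push forward to operators on $\calC_*$. As in the singular case one must show that, for an open cover of $X$, the inclusion of the measure chains with ``small'' determination set into all of $\calC_*(X)$ is a chain homotopy equivalence, by iterating $\mathrm{Sd}$ enough times. The difficulty is that the number of iterations needed to shrink a simplex below the Lebesgue number of the cover depends on the simplex, so one cannot use a single power of $\mathrm{Sd}$; here the compact determination set hypothesis is essential, as it allows one to stratify the (compact) determination set according to the required number of subdivisions and to assemble a measurable, measure-compatible homotopy operator. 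This measurability and compactness bookkeeping is the technical heart of the argument.

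Finally I would remove the finiteness restriction. Every compact subset of a CW-complex lies in a finite subcomplex, and every element of $\calC_*(X)$ admits a compact determination set; together with the compact support of singular chains, this exhibits both theories as direct limits over the finite subcomplexes, with $\iota_*$ respecting these limits. It therefore suffices to prove the isomorphism for finite CW-pairs, where the Eilenberg--Steenrod uniqueness theorem applies on the strength of homotopy invariance, exactness, excision and the dimension axiom alone, without recourse to an additivity axiom. Passing to the limit, and then invoking homotopy invariance of both theories to replace $X$ by a homotopy equivalent CW-complex, one concludes that $H_n(\iota_n)$ is an isomorphism for every space of CW homotopy type; taking $A=\emptyset$ yields the stated absolute isomorphism.
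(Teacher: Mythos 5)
The paper itself does not prove this theorem: it is imported as a black box from \cite{Hansen,Zastrow}, so there is no internal proof to compare against. Your outline does follow the same overall strategy as those references --- verify that measure homology satisfies the Eilenberg--Steenrod axioms, compare with singular homology via the natural transformation $\iota_*$, apply uniqueness on finite CW pairs, and pass to colimits --- and the easy parts of your skeleton are sound: $\iota_*$ is a chain map, the prism and subdivision operators are finite sums of push-forwards under continuous self-maps of mapping spaces, the dimension axiom holds on the chain level, and the colimit over finite subcomplexes works because the evaluation image of a compact determination set is compact in $X$.

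The genuine gap is exactly where you place the ``technical heart'': excision is described but not carried out, and that step is the entire substance of Hansen's and Zastrow's papers. Announcing that one should ``stratify the determination set according to the required number of subdivisions and assemble a measurable, measure-compatible homotopy operator'' is a plan, not an argument. Moreover, your diagnosis of the difficulty is partly misplaced. For a single measure chain the non-uniformity you worry about does not occur: if $D\subseteq S_n(X)$ is a compact determination set, then continuity of the evaluation map $D\times\Delta^n\to X$ (compact-open topology, $\Delta^n$ locally compact) together with compactness of $D\times\Delta^n$ produces a Lebesgue number valid simultaneously for all $\sigma\in D$, hence a \emph{single} power $\mathrm{Sd}^k$ making every simplex in the determination set small with respect to the cover. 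One can then argue cycle-by-cycle with a uniform $k$ per measure, just as one does for a single singular chain, and no stratification is needed. The genuinely measure-theoretic content lies elsewhere: one must check that the set of small simplices is Borel in $S_n(X)$ (it is, since $S_n(U)$ is open in the compact-open topology for $U\subseteq X$ open), that restricting a measure to such Borel sets preserves finite total variation and compact determination sets (this is what lets you split a small measure cycle into pieces subordinate to the cover), that the subdivision and homotopy operators preserve these properties, and that the relative complex $\calC_*(X)/\calC_*(A)$ interacts correctly with all of this. None of that appears in your text beyond the assertion that it can be done, so the one axiom whose verification actually distinguishes measure homology from singular homology is assumed rather than proved.
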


\subsection{Continuous cohomology}
Let us now recall the definition of continuous cohomology of a topological space $X$. 
We
regard $S_n(X)$ as a subset of $C_n(X)$, so that for every cochain
$\varphi\in C^n(X)$ it makes sense to consider
the restriction $\varphi|_{S_n(X)}$. We say that
$\varphi$ is \emph{continuous} if $\varphi|_{S_n(X)}$ is, and we set
$$
C_c^\ast (X)=\{\varphi\in C^\ast(X)\, |\, \varphi\ {\rm is\ continuous} \}\ .
$$
It is readily seen that 
$C_c^\ast (X)$ is a subcomplex of $C^\ast (X)$. Its homology
is the \emph{continuous cohomology} of $X$, and it is denoted
by $H_c^\ast (X)$. The following result describes the relationship between
continuous cohomology and the usual singular cohomology.

\begin{teo}\label{fri}
Suppose that $X$ has the homotopy type of a CW-complex. Then
the inclusion 
$\rho^*\colon C_c^*(X)\hookrightarrow C^*(X)$ induces isomorphisms
$$
H^n(\rho^n)\colon H_c^n(X)\to H^n(X)\, , \quad n\in\matN\ .
$$
\end{teo}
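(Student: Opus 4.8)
The plan is to identify continuous cochains, up to coboundaries, with genuine singular cochains by a smoothing/approximation argument, exploiting that a CW-complex admits a well-behaved model on which a chain homotopy between the identity and a ``continuous'' replacement can be built. Concretely, first I would reduce to the case where $X$ is itself a CW-complex, since all the functors involved are homotopy invariant and $\rho^*$ is natural with respect to continuous maps. Then, since singular homology (and hence, by the universal coefficient theorem over $\matR$, singular cohomology) commutes with direct limits over finite subcomplexes, and continuous cochains also behave well under such limits, I would further reduce to the case where $X$ is a \emph{finite} CW-complex; from there one can even pass to a finite simplicial complex, or keep $X$ a finite CW-complex and work directly. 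The upshot of these reductions is that we may assume $X$ is compact and, in particular, paracompact and locally contractible.

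The heart of the argument is to produce, for a finite CW-complex $X$, a chain map $s_*\colon C_*(X)\to C_*(X)$ landing in a ``small'' subcomplex together with a chain homotopy $T$ between $s_*$ and $\id$, such that the dual $s^*$ carries \emph{every} cochain to a continuous one. A natural candidate is a barycentric-type subdivision-and-straightening operator, or Eilenberg's iterated subdivision combined with a partition-of-unity smoothing: given a singular simplex $\sigma\colon \Delta^n\to X$, one uses a locally finite cover of $X$ by contractible open sets together with contractions to push $\sigma$ to a simplex whose combinatorics depend only on which cover elements are met, continuously in $\sigma$. The key point is that the resulting assignment $\sigma\mapsto s_*(\sigma)$ is continuous in the compact-open topology on $S_n(X)$, so for any $\varphi\in C^n(X)$ the cochain $s^n(\varphi)=\varphi\circ s_n$ restricted to $S_n(X)$ is continuous, i.e. $s^*$ maps $C^*(X)$ into $C_c^*(X)$. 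Since $s_*$ is chain homotopic to $\id_{C_*(X)}$, the composition $\rho^*\circ s^*$ is chain homotopic to $\id_{C^*(X)}$, giving a one-sided inverse to $H^n(\rho^n)$ on cohomology; and $s^*\circ\rho^*$ is chain homotopic to $\id_{C_c^*(X)}$ provided the homotopy $T$ can be chosen to restrict to continuous cochains when dualized, which again follows if $T$ itself is given by a continuous (in $\sigma$) formula. Hence $H^n(\rho^n)$ is an isomorphism with inverse induced by $s^*$.

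The main obstacle is constructing $s_*$ and $T$ so that the dualized operators genuinely preserve \emph{continuity} of cochains on the compact-open topology of $S_n(X)$: one must check that the smoothing is natural enough that $\sigma\mapsto s_*(\sigma)$ and $\sigma\mapsto T(\sigma)$ are continuous as maps $S_n(X)\to C_*(X)$ (with $C_*(X)$ topologized so that this makes sense, e.g. as a direct limit of the spaces $\prod S_*(X)$ appearing with bounded coefficients), and that the combinatorial type of $s_*(\sigma)$ is locally constant in $\sigma$. This is exactly the technical core of the Zastrow--Hansen-type arguments, so I would either cite the relevant construction from \cite{Zastrow} or \cite{Hansen} and adapt it, or invoke the classical fact that for CW-complexes the ``small simplices'' subcomplex (simplices supported in members of an open cover) computes the same homology via an explicit, natural chain homotopy, and then observe that choosing the cover to consist of contractible sets with fixed contractions makes the straightening operator continuous in $\sigma$. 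Once continuity of these operators is in hand, the rest is the formal homological-algebra verification sketched above, together with the already-established reductions to the finite case.
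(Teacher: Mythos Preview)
The paper's proof is a two-line citation: Milnor's theorem shows any CW-complex has the homotopy type of a simplicial complex with the metric topology, hence of a locally contractible metrizable space, and then the isomorphism follows from the author's own earlier result \cite[Theorem~1.1]{Frigerio}, which establishes $H_c^*\cong H^*$ for such spaces.

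Your proposal is genuinely different: rather than quoting that theorem, you sketch a direct construction of a chain-level smoothing operator $s_*$ and homotopy $T$ whose duals land in $C_c^*$. This is essentially an outline of what \cite{Frigerio} proves, so in effect you are rederiving the cited result. The overall architecture (build a continuous-in-$\sigma$ replacement for the identity) is correct in spirit, but two steps are real gaps rather than routine details.

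First, the reduction to finite subcomplexes is not justified on the continuous side. For ordinary cohomology over $\matR$ one has $H^n(X)\cong \varprojlim H^n(X_\alpha)$ via UCT, but the parallel claim $H_c^n(X)\cong \varprojlim H_c^n(X_\alpha)$ is not obvious: continuous cocycles on the $X_\alpha$ need not glue to a continuous cocycle on $X$, and there is no a priori reason continuous cohomology satisfies this Mittag-Leffler-type behaviour. Without this, knowing the result for finite complexes does not yield it for arbitrary ones.

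Second, you correctly identify the construction of $s_*$ and $T$ as the ``main obstacle'' but do not carry it out; invoking \cite{Zastrow,Hansen} does not close the gap, since those papers build operators tailored to measure chains, and the delicate point here is precisely that the dual operators must preserve continuity of cochains (equivalently, that $\sigma\mapsto s_*(\sigma)$ has \emph{locally constant} combinatorial type and continuously varying vertices). That is exactly the content of \cite{Frigerio}, so your proposal amounts to ``reprove the cited theorem'' without supplying the key construction.
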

\begin{proof}
By~\cite[Theorem 2]{Milnor}, any CW-complex has the homotopy type of a simplicial
complex, endowed with the \emph{metric} topology in the sense of Eilenberg and Steenrod~\cite[page 75]{ES}. Therefore, $X$ has the homotopy type of a locally contractible
metrizable space, and the conclusion follows from~\cite[Theorem 1.1]{Frigerio}.
\end{proof}

\section{Proof of Theorem~\ref{main:teo}}
Let $X$ be a topological space. For every $n\in\matN$, the map
$$
C_c^n(X)\times \calC_n(X)\to\R,\qquad (\varphi,\mu)\mapsto \int_{S_n(X)}\varphi\, {\rm d}\mu
$$
induces a pairing
$$
\eta_n\colon H_c^n(X)\times\calH_n(X)\to\R\ .
$$
It is very easy to compare this pairing with the usual Kronecker pairing
$$
\kappa_n\colon H^n(X)\times H_n(X)\to \R
$$
between singular homology and singular cohomology. In fact,
it readily follows from the definitions that
\begin{equation}\label{commuta}
\eta_n( \alpha,H_n(\iota_n)(\beta))=\kappa_n (H^n(\rho^n)(\alpha), \beta)  \quad
\textrm{for\ every}\ \alpha\in H_c^n(X),\, \beta\in H_n(X)\ .
\end{equation}

By construction,
Berlanga's topology on $\calH_n(X)$
coincides with the weak topology associated to the pairing $\eta_n$. 
Therefore, 
Theorem~\ref{main:teo} may be restated as follows:

\begin{teo}\label{main2:teo}
Suppose that $X$ has the homotopy type of a CW-complex. 
Then 
every
linear functional on $\calH_n(X)$ is represented
via $\eta_n$ by some element in $H^n_c(X)$.
\end{teo}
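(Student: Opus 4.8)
The plan is to identify $\calH_n(X)$ with the ordinary singular homology $H_n(X)$ via Theorem~\ref{ZH}, and then use the isomorphism between continuous cohomology $H^n_c(X)$ and ordinary cohomology $H^n(X)$ from Theorem~\ref{fri} to transport the classical fact that every linear functional on singular homology is represented via the Kronecker pairing. More precisely, let $f\colon \calH_n(X)\to\R$ be an arbitrary linear functional. Composing with the isomorphism $H_n(\iota_n)$ of Theorem~\ref{ZH} gives a linear functional $f\circ H_n(\iota_n)$ on $H_n(X)=H_n(C_\ast(X))$. First I would observe that this functional is represented via the Kronecker pairing $\kappa_n$ by some cohomology class $[\varphi]\in H^n(X)$: this is the standard statement that the natural map $H^n(X)\to \mathrm{Hom}_\R(H_n(X),\R)$ is surjective, which holds over a field because $H^n(X)$ computes $\mathrm{Hom}$ of the homology (there is no $\mathrm{Ext}$ term, as we work with real coefficients and $H_{n-1}(X)$ is a real vector space hence free).

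Next I would pull this class back to continuous cohomology. By Theorem~\ref{fri}, the map $H^n(\rho^n)\colon H^n_c(X)\to H^n(X)$ is an isomorphism, so there is a unique class $\alpha\in H^n_c(X)$ with $H^n(\rho^n)(\alpha)=[\varphi]$. I claim $\alpha$ represents $f$ via $\eta_n$, i.e. $\eta_n(\alpha,\gamma)=f(\gamma)$ for every $\gamma\in\calH_n(X)$. Since $H_n(\iota_n)$ is an isomorphism, every such $\gamma$ is of the form $H_n(\iota_n)(\beta)$ for a unique $\beta\in H_n(X)$, and then the compatibility relation~\eqref{commuta} gives
\[
\eta_n(\alpha,H_n(\iota_n)(\beta))=\kappa_n(H^n(\rho^n)(\alpha),\beta)=\kappa_n([\varphi],\beta)=f(H_n(\iota_n)(\beta)),
\]
where the last equality is the defining property of $[\varphi]$. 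Hence $f$ is represented by $\alpha$ via $\eta_n$, which is exactly the assertion of Theorem~\ref{main2:teo}; combined with the Hausdorff criterion recalled in Subsection~\ref{weak:sub}, this also re-proves that Berlanga's topology is the strongest weak topology and is Hausdorff.

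I do not expect a serious obstacle here: the entire argument is a diagram chase once the two hard inputs, Theorems~\ref{ZH} and~\ref{fri}, are in hand, and those have already been cited or proved in the preliminaries. The one point requiring a moment of care is the purely algebraic step that every linear functional on $H_n(X)$ comes from a singular cochain — one must note that a cocycle representing a given functional on cycles can be chosen to vanish on boundaries, which is immediate since the functional on $H_n(X)$ is well-defined, and then extend it arbitrarily (e.g. by zero on a complement) to all of $C_n(X)$ to get an honest cochain; its coboundary vanishes on $S_{n+1}(X)$-chains' boundaries appropriately, and one checks it is a cocycle. This is routine linear algebra over $\R$ and is the only place where the field-coefficient hypothesis is used.
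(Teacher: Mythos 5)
Your proposal is correct and follows essentially the same route as the paper's own proof: represent $f\circ H_n(\iota_n)$ via the Kronecker pairing using the Universal Coefficient Theorem, pull the representing class back through the isomorphism $H^n(\rho^n)$ of Theorem~\ref{fri}, and conclude via the compatibility relation~\eqref{commuta} together with the isomorphism of Theorem~\ref{ZH}. The extra care you take in justifying the surjectivity of $H^n(X)\to\mathrm{Hom}_\R(H_n(X),\R)$ over field coefficients is sound and only makes explicit what the paper cites as the Universal Coefficient Theorem.
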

\begin{proof}
Let $\beta\colon\calH_n(X)\to \R$ be a fixed linear functional. 
The Universal Coefficient Theorem ensures that 
the composition $\beta\circ H_n(\iota_n)\colon H_n(X)\to\R$ is represented 
via
$\kappa_n$ by an element
in $H^n(X)$, \emph{i.e.}~that there exists
$\gamma\in H^n(X)$ such that 
\begin{equation}\label{sec:eq}
\kappa_n(\gamma, c)=\beta(H_n(\iota_n)(c))\quad \textrm{for\ every}\
c\in H_n(X)\ .
\end{equation}
Recall now from Theorem~\ref{fri} that the map
$H^n(\rho^n)\colon H^n_c(X)\to H^n(X)$ is an isomorphism, and set
$\gamma_c=H^n(\rho^n)^{-1}(\gamma)\in H^n_c(X)$. 
From Equations~\eqref{commuta} and~\eqref{sec:eq}
we deduce that 
$$
\eta_n(\gamma_c, H_n(\iota_n)(c))=
\kappa_n(\gamma, c)=
\beta(H_n(\iota_n)(c))\quad \textrm{for\ every}\ c\in H_n(X)\ .
$$
By Theorem~\ref{ZH}, the map $H_n(\iota_n)\colon H_n(X)\to \calH_n(X)$
is an isomorphism, so the last equation implies that 
$\eta_n(\gamma_c,\cdot)=\beta$ on the whole $\calH_n(X)$.
We have thus shown that $\beta$ is represented by $\gamma_c$
via $\eta_n$, and this concludes the proof.
\end{proof}

We conclude the paper with the following remark, that describes
the (lack of) relationship between Berlanga's
topology the topology induced by Thurston's seminorm on measure homology.

\begin{rem}\label{last}
Let $X$ be a CW-complex, and let us call \emph{Thurston's topology} 
the locally convex
topology on $\calH_n(X)$ associated to Thurston's seminorm. We show here that
there are examples where Thurston's topology is not finer than Berlanga's one,
and viceversa. 

Recall that the simplicial volume
of the closed orientable surface $\Sigma_2$ of genus two is positive~\cite{Gromov,Thurston}.
Therefore, 
if $X_\infty$ is the disjoint union of a countable family of copies of $\Sigma_2$,
then it is easily checked
that Gromov's seminorm
on $H_2(X_\infty)$ is in fact a norm. Since
singular homology and measure homology are isometrically isomorphic~\cite{Loh},
the same is true
for Thurston's seminorm on $\calH_2(X_\infty)$. Since $\dim H_2(X_\infty)=\infty$, the argument
described in Remark~\ref{weak:rem} shows that in this case
Berlanga's topology is not finer than Thurston's one. On the other hand, 
every normed infinitely dimensional vector space admits a non-continuous
linear functional, and this implies that 
Thurston's topology on $\calH_2(X_\infty)$ is not finer than Berlanga's one.

If $\dim \calH_n(X)<\infty$, then Berlanga's topology is the
strongest locally convex topology on $\calH_n(X)$, and is therefore finer than Thurston's
one. More precisely, in the finite dimensional case Berlanga's topology
is strictly finer than Thurston's one if and only if Thurston's seminorm
is not a norm (this is the case, for example, for $\calH_1 (S^1)$).
\end{rem}

\bibliographystyle{amsalpha}
\bibliography{biblio}

\end{document}